\newtheorem{thm}{Theorem}[section]
\newtheorem{lem}[thm]{Lemma}
\newtheorem{prop}[thm]{Proposition}
\newtheorem{cor}[thm]{Corollary}
\newtheorem{procedure}[thm]{Procedure}%[section]
\theoremstyle{definition}
\theoremstyle{definition}
\newtheorem{defn}[thm]{Definition}
\theoremstyle{plain}
\begin{document}

\title[The SGIS of Almost Linear Point Configurations]{The Symbolic Generic Initial System of Almost Linear Point Configurations in $\mathbb{P}^2$}
\author{Sarah Mayes}

\maketitle

\vspace*{-2em} %%reduce space between title and abstract

\begin{abstract}
Consider an ideal $I \subseteq K[x,y,z]$ corresponding to a point configuration in $\mathbb{P}^2$ where all but one of the points lies on a single line.  In this paper we study the symbolic generic initial system $\{ \text{gin}(I^{(m)})\}_m$ obtained by taking the reverse lexicographic generic initial ideals of the uniform fat point ideals $I^{(m)}$.  We describe the \textit{limiting shape} of $\{ \text{gin}(I^{(m)})\}_m$ and, in proving this result, demonstrate that infinitely many of the ideals $I^{(m)}$ are componentwise linear.
\end{abstract}

\section{Introduction}

Given a set of distinct points $\{ p_1, \dots, p_r \}$ of $\mathbb{P}^2$, we may consider the fat point subschema $Z = m(p_1+\cdots +p_r)$ whose ideal $I_Z \subseteq K[x,y,z]$ consists of functions vanishing to at least order $m$ at each point.  If $I$ is the ideal of $\{ p_1, \dots, p_r \}$, $I_Z$ is equal to the $m$th symbolic power of $I$, $I^{(m)}$.  While uniform fat point ideals are relatively easy to describe, computing even simple invariants such as  Hilbert functions or the degree of least degree elements has proven very difficult.  Understanding how the configuration of the points $\{ p_1, \dots, p_r \}$ is related to invariants of the ideals $I^{(m)}$ is an active area of research (see, for example, \cite{CHT11}, \cite{GH07}, \cite{Markwig03}, and \cite{CH12}).

Our main objective is to describe the limiting behaviour of the Hilbert functions of the uniform fat point ideals $\{ I^{(m)}\}_m$ as $m$ gets large.  We study the case where $I$ is the ideal of a point configuration where all but one of the points lies on a single line.  The study of the asymptotic behaviour of algebraic objects has been a significant research trend over the past twenty years; it is motivated by the philosophy that the limiting behaviour of a collection of objects is often simpler than the individual elements within the collection.  For example, within the study of fat points, more can be said about the limit $\lim_{m\rightarrow \infty} \frac{\alpha(I^{(m)})}{m}$ than the individual invariants $\alpha(I^{(m)})$, where $\alpha(I^{(m)})$ denotes the degree of the least degree element of $I^{(m)}$ (for example, see \cite{Harbourne02}).

It is well-known that the Hilbert function of an ideal and its generic initial ideal are equal.  Thus, to describe the limiting behaviour of the Hilbert functions of $\{ I^{(m)}\}_m$ we will study the reverse lexicographic \textit{symbolic generic initial system} $\{ \text{gin}(I^{(m)})\}_m$ of $I$ and describe its \textit{limiting shape}.  The limiting shape $P$ of $\{ \text{gin}(I^{(m)}) \}_m$ is defined to be the limit 
$$\lim_{m \rightarrow \infty} \frac{1}{m} P_{\text{gin}(I^{(m)})}$$
where $P_{\text{gin}(I^{(m)})}$ denotes the Newton polytope of $\text{gin}(I^{(m)})$.  When $I$ is an ideal corresponding to a point configuration in $\mathbb{P}^2$ each reverse lexicographic generic initial ideal $\text{gin}(I^{(m)})$ is generated in two variables; thus $P_{\text{gin}(I^{(m)})}$, and $P$ itself, may be thought of as a subset of $\mathbb{R}^2$.  There is evidence that this limiting shape captures geometric information about the corresponding arrangement  of points (see discussion in Section 5 of \cite{Mayes6points}).

The main result of this paper is the following theorem describing the limiting shape of $\{ \text{gin}(I^{(m)})\}_m$ when $I$ is an ideal of a point configuration where all but one of the points lies on a single line.

\begin{thm}
\label{thm:limitingshapelpointson}
Fix some integer $l > 2$ and let $I \subset K[x,y,z]$ be the ideal corresponding to the arrangement of $l+1$ points $p_1, \dots, p_{l+1}$ of $\mathbb{P}^2$ such that $p_1, \dots, p_l$ lie on a line $L$ and $p_{l+1}$ does not lie $L$.  Then the limiting shape of the symbolic generic initial system  $\{ \textnormal{gin}(I^{(m)} )\}_m$ of $I$ is the shaded polytope pictured in Figure \ref{fig:limitingshape}.
\end{thm}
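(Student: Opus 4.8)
\medskip

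\noindent\emph{Sketch of the intended argument.}

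The plan is to reduce the theorem to a computation of the Hilbert functions of the ideals $I^{(m)}$ and then pass to the limit. Since $I^{(m)}$ is the saturated ideal of a zero-dimensional subscheme $Z=m(p_1+\cdots+p_{l+1})$ of $\mathbb{P}^2$, the ring $K[x,y,z]/I^{(m)}$ is Cohen--Macaulay of dimension one, so $z$ is a nonzerodivisor on $K[x,y,z]/\mathrm{gin}(I^{(m)})$; hence, as recalled in the introduction, no minimal generator of $\mathrm{gin}(I^{(m)})$ involves $z$, i.e. $\mathrm{gin}(I^{(m)}) = J_m\cdot K[x,y,z]$ for a finite-colength strongly stable monomial ideal $J_m\subseteq K[x,y]$, and $P_{\mathrm{gin}(I^{(m)})}$ is the Newton polytope of $J_m$ inside $\mathbb{R}^2$. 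Because $J_m$ is strongly stable, in each degree $e$ its complement is spanned by the monomials of smallest $y$-exponent, so the entire staircase of $J_m$ is read off from the values $h_m(e):=\dim_K(K[x,y]/J_m)_e$; and since $K[x,y,z]/\mathrm{gin}(I^{(m)})\cong (K[x,y]/J_m)[z]$, these are the first differences $h_m(e)=\dim_K(K[x,y,z]/I^{(m)})_e-\dim_K(K[x,y,z]/I^{(m)})_{e-1}$ of the Hilbert function of $I^{(m)}$. Concretely $x^ay^b\in J_m$ if and only if $b\ge h_m(a+b)$, so the generator of $J_m$ in ``column $b$'' sits at $x$-exponent $a_m(b)=\min\{e\ge b: h_m(e)\le b\}-b$, and $P_{\mathrm{gin}(I^{(m)})}=\mathrm{conv}\bigcup_{b\ge 0}\bigl((a_m(b),b)+\mathbb{R}^2_{\ge 0}\bigr)$. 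Thus everything comes down to the numbers $\dim_K(I^{(m)})_d=h^0(\mathbb{P}^2,\mathcal{I}_Z(d))$.

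To compute these I would exploit the configuration. Let $\ell$ define $L$ and pick a form $f$ of degree $l$ meeting $L$ transversally exactly at $p_1,\dots,p_l$, so that $I(p_1,\dots,p_l)=(\ell,f)$ is a complete intersection and $I(p_1,\dots,p_l)^{(m)}=(\ell,f)^m$; then $I^{(m)}=(\ell,f)^m\cap \mathfrak{m}_{p_{l+1}}^m$, where $\mathfrak{m}_{p_{l+1}}$ is the ideal of $p_{l+1}$. Restricting a degree-$d$ form to $L\cong\mathbb{P}^1$ shows that when $d<lm$ every $g\in(I^{(m)})_d$ is divisible by $\ell$, and $g/\ell$ vanishes to order $\ge m-1$ at $p_1,\dots,p_l$ and to order $\ge m$ at $p_{l+1}$ (here $p_{l+1}\notin L$ is used). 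Iterating this ``peeling'' $k_0=\max\bigl(0,\lceil (lm-d)/(l-1)\rceil\bigr)$ times reduces to the balanced range $d':=d-k_0\ge l(m-k_0)=:lm'$, in which I would prove that the $l$ collinear fat points of multiplicity $m'$ together with the multiplicity-$m$ point off $L$ impose independent conditions, so that
\[
\dim_K(I^{(m)})_d=\max\!\left(0,\ \binom{d-k_0+2}{2}-l\binom{m-k_0+1}{2}-\binom{m+1}{2}\right).
\]
The independence would be proved by descending induction on the collinear multiplicity via the restriction sequence $0\to\mathcal{I}_{Z':L}(d'-1)\xrightarrow{\cdot\ell}\mathcal{I}_{Z'}(d')\to\mathcal{I}_{Z'\cap L,\,L}(d')\to 0$, where $Z':L=(m'-1)(p_1+\cdots+p_l)+m\,p_{l+1}$ and $Z'\cap L=m'(p_1+\cdots+p_l)$ on $\mathbb{P}^1$, using the vanishing of $H^1(\mathbb{P}^2,\mathcal{I}_{Z':L}(d'-1))$ in this range together with the classical count $\dim_K(I_{Z'\cap L,L})_{d'}=\max(0,d'+1-lm')$.

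With the Hilbert functions in hand, the staircase formula of the first paragraph gives $P_{\mathrm{gin}(I^{(m)})}$ exactly for every $m$, and it remains to divide by $m$ and let $m\to\infty$. Writing $d=\delta m$ and $b=\beta m$, the scaled Hilbert function $\tfrac1{m^2}\dim_K(K[x,y,z]/I^{(m)})_{\delta m}$ converges to an explicit piecewise-quadratic function of $\delta$ (obtained from the displayed formula, using $k_0/m\to\kappa(\delta)=\max(0,(l-\delta)/(l-1))$ for $\delta\le l$), so $\tfrac1m h_m(\delta m)$ converges to its derivative, a piecewise-linear function $\bar h(\delta)$; a short computation shows $\dim_K(I^{(m)})_{\delta m}$ becomes positive exactly past $\delta=\bar\alpha=1+\sqrt{1-1/l}$. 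Substituting into $a_m(b)=\min\{e\ge b: h_m(e)\le b\}-b$ gives $\tfrac1m a_m(\beta m)\to A(\beta):=\min\{\delta\ge\beta:\bar h(\delta)\le\beta\}-\beta$, a piecewise-linear function, and one checks this convergence is uniform enough to force Hausdorff convergence $\tfrac1m P_{\mathrm{gin}(I^{(m)})}\to\mathrm{conv}\bigcup_{\beta\ge 0}\bigl((A(\beta),\beta)+\mathbb{R}^2_{\ge 0}\bigr)$; computing the breakpoints of $A$ (equivalently, the slope changes of $\bar h$) identifies this limit with the shaded polytope of Figure~\ref{fig:limitingshape}. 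Finally, at the infinitely many $m$ for which the explicit minimal free resolution of $(\ell,f)^m\cap\mathfrak{m}_{p_{l+1}}^m$ shows $I^{(m)}$ has a linear resolution in each degree, $I^{(m)}$ is componentwise linear; this yields the remark in the abstract and, via equality of Betti numbers of a componentwise linear ideal and its reverse lexicographic gin, provides an independent check of the vertices of $P_{\mathrm{gin}(I^{(m)})}$ for those $m$.

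The main obstacle is the middle step: establishing that in the balanced range $d'\ge lm'$ the collinear fat points and the off-line fat point impose independent conditions, i.e. the vanishing of the auxiliary $H^1$ (equivalently, surjectivity of restriction to $L$). The peeling cleanly disposes of the deficient range $d<lm$, but one must verify that no unexpected syzygy or special position appears at or beyond the transition $d'=lm'$; this is precisely where the hypotheses $l>2$ and $p_{l+1}\notin L$ are used, and it is probably cleanest to isolate it as a lemma proved by induction on $m$ (handling the single point $p_{l+1}$ last), or by appealing to known postulation results for fat points supported on a line together with one general point.
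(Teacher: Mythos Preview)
Your reduction to the first differences $h_m(e)$ of the Hilbert function, and the passage from them to the staircase of $\mathrm{gin}(I^{(m)})$ via Borel-fixedness, is sound and is a genuinely different route from the paper: the paper computes the \emph{generator degrees} of $I^{(m)}$ via Harbourne's algorithm on the blow-up, then for $m$ divisible by $l(l-1)$ uses componentwise linearity to transfer those degrees to $\mathrm{gin}(I^{(m)})$. Your method would bypass the componentwise-linearity step entirely for the limiting shape, which is a real simplification.

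However, your Hilbert-function computation has a genuine gap. The claim that, after peeling $k_0$ copies of $\ell$, the residual scheme $Z'=m'(p_1+\cdots+p_l)+m\,p_{l+1}$ imposes independent conditions in every degree $d'\ge lm'$ is \emph{false} in precisely the range that determines $\alpha(I^{(m)})$. The off-line point still carries its full multiplicity $m$, and for $d<2m$ the lines $L_i=\overline{p_ip_{l+1}}$ are fixed components of the linear system which your peeling does not remove. Concretely, take $l=3$, $m=6$, $d=10$: then $k_0=4$, $d'=6$, $m'=2$, and your displayed formula gives $\binom{8}{2}-3\binom{3}{2}-\binom{7}{2}=-2$, hence $\dim_K(I^{(6)})_{10}=0$; but $\ell^{\,4}(L_1L_2L_3)^2$ is a nonzero element of $(I^{(6)})_{10}$ (indeed spans it). The upshot is that your asymptotic initial degree $\bar\alpha=1+\sqrt{1-1/l}$ is wrong; the correct value is $2-1/l$, strictly smaller (for $l=3$ these are roughly $1.816$ versus $1.667$), so your limiting polytope has the wrong vertex on the $x$-axis.

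What is missing corresponds exactly to the step in the paper's procedure for the range $2m-m/l\le d<2m$: after subtracting multiples of the line class $A=e_0-e_1-\cdots-e_l$, one must also subtract $\gamma=2m-d$ copies of each $B_i=e_0-e_i-e_{l+1}$. In your language, after peeling $\ell$ you must also peel the $l$ lines $L_i$ through $p_{l+1}$. Once you incorporate this second peeling, your Hilbert-function formula will be correct in all ranges and the rest of your asymptotic argument will produce the polytope in Figure~\ref{fig:limitingshape}.
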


\begin{figure}
\begin{center}
\includegraphics[width=5cm]{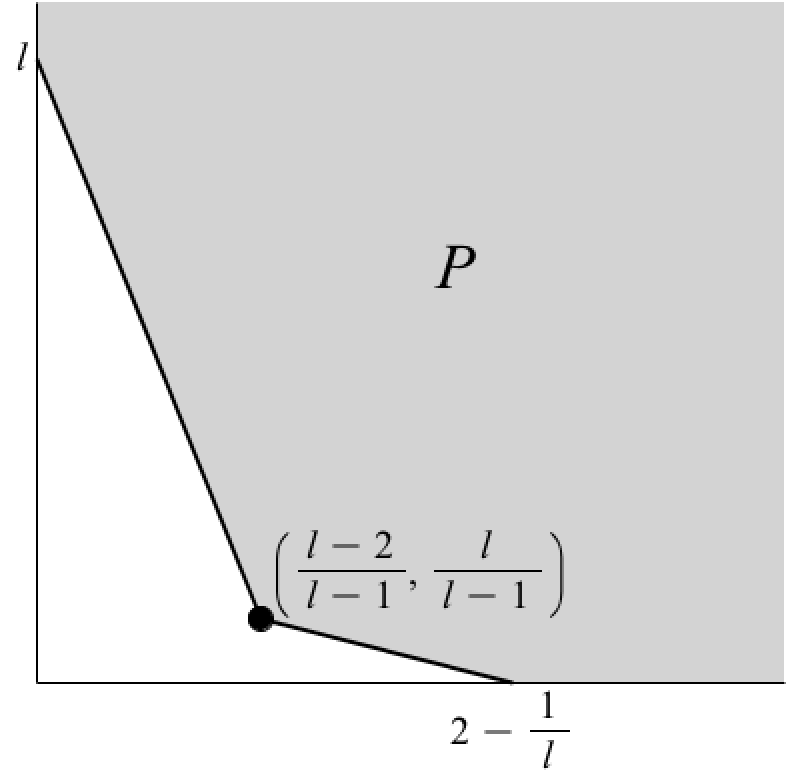}
\end{center}
\caption{The limiting shape of $\{ \textnormal{gin}(I^{(m)} )\}_m$ where $I$ is an ideal corresponding to a point configuration with $l$ points on a line and one point off of that line.}
\label{fig:limitingshape}
\end{figure}

In proving this theorem we will show that when $I$ is the ideal of such an almost linear point configuration,  $I^{(m)}$ is \textit{componentwise linear} for infinitely many $m$ (Theorem \ref{thm:componentwiselinearlpointson}).  This property means that the minimal free resolution of the ideal has a very simple form.  Other classes of ideals that are componentwise linear include stable monomial ideals, Gotzmann ideals, and ideals of at most $n+1$ fat points in general position in $\mathbb{P}^n$ (\cite{HH99}, \cite{Francisco05}).

Background information necessary for the proof of the main result is contained in Section \ref{sec:background}.  In Section \ref{sec:genscomplin} we prove results on componentwise linearity for individual fat point ideals.  Section \ref{sec:asympproof} uses these results to prove Theorem \ref{thm:limitingshapelpointson}.

\section{Background}
\label{sec:background}
%Following "Free Resolutions of Fat Point Ideals on P^2

In this section we will review facts about componentwise linearity, generic initial ideals of fat points, and blow-ups of points in $\mathbb{P}^2$.  Throughout $R = K[x,y,z]$ is a polynomial ring over a field of characteristic 0 with the standard grading and the reverse lexicographic order where $x > y> z$.

%%%
\subsection{Componentwise linearity}

Componentwise linear ideals are homogeneous ideals with particularly nice minimal free resolutions.

\begin{defn}[{\cite{HH99}}]
\label{defn:componentwiselinear}
Let $R = K[x_1, \dots, x_n]$ and $M$ be a graded $R$-module.  Then $M$ has a \textbf{$d$-linear resolution} if the graded minimal free resolution of $M$ is of the form
$$0 \rightarrow R(-d-s)^{\beta_s} \rightarrow \cdots \rightarrow R(-d-1)^{\beta_1} \rightarrow R(-d)^{\beta_0} \rightarrow M \rightarrow 0.$$
For any homogeneous ideal $I \subset R$, let $(I_k)$ be the ideal generated by all homogeneous polynomials of degree $k$ contained in $I$.  A homogeneous ideal $I$ is said to be \textbf{componentwise linear} if $(I_k)$ has a linear resolution for all $k$.
\end{defn}

The following theorem of Aramova, Herzog, and Hibi connects componentwise linearity to the study of generic initial ideals and will be our main tool for detecting this property.

\begin{thm}[{\cite{AHH00}}]
\label{thm:complingin}
Let $I$ be a homogeneous ideal of $K[x_1, \dots, x_n]$.  Then $I$ is componentwise linear if and only if  $I$ and its reverse lexicographic generic initial ideal $\textnormal{gin}(I)$ have the same Betti numbers.  
\end{thm}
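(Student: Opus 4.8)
The plan is to exploit two features of the reverse lexicographic generic initial ideal. First, passing to an initial ideal can only increase graded Betti numbers, so $\beta_{i,j}(I) \le \beta_{i,j}(\textnormal{gin}(I))$ for all $i,j$, with equality of the alternating sums in each degree forced by the equality of Hilbert functions. Second, in characteristic $0$ the ideal $\textnormal{gin}(I)$ is strongly stable; strongly stable ideals are componentwise linear and their graded Betti numbers are given explicitly by the Eliahou--Kervaire resolution, so in particular every component $(\textnormal{gin}(I))_{\langle d\rangle}$ has a linear resolution and the full Betti table of $\textnormal{gin}(I)$ is understood. Thus Theorem \ref{thm:complingin} is really the assertion that the (a priori larger) Betti table of $\textnormal{gin}(I)$ collapses onto that of $I$ exactly when $I$ is itself componentwise linear.

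The first reduction I would make is to reformulate componentwise linearity through regularity. Since $I_{\langle d\rangle} := (I_d)$ is generated in the single degree $d$, it has a $d$-linear resolution if and only if $\operatorname{reg}(I_{\langle d\rangle}) = d$; hence, by Definition \ref{defn:componentwiselinear}, $I$ is componentwise linear if and only if $\operatorname{reg}(I_{\langle d\rangle}) = d$ for every $d$. I would then record a single-degree comparison lemma: if $J$ is generated in one degree $d$ and has a linear resolution, then $\beta_{i,j}(J) = \beta_{i,j}(\textnormal{gin}(J))$. This follows from the Bayer--Stillman equality $\operatorname{reg}(\textnormal{gin}(J)) = \operatorname{reg}(J) = d$ together with the fact that a strongly stable ideal whose regularity equals its initial degree is generated in that single degree (the regularity of a stable ideal is the top degree of a minimal generator), so $\textnormal{gin}(J)$ again has a linear resolution; and two ideals with linear resolutions and equal Hilbert functions necessarily have equal Betti numbers, because a purely linear Betti table admits no consecutive cancellation and is therefore read off directly from the Hilbert series.

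For the forward direction, assuming $I$ componentwise linear, I would assemble the Betti numbers of $I$ from those of its components via the standard exact relation $\beta_{i,i+j}(I) = \beta_{i,i+j}(I_{\langle j\rangle}) - \beta_{i,i+j}(\mathfrak{m}\, I_{\langle j-1\rangle})$, in which both $I_{\langle j\rangle}$ and $\mathfrak{m}\, I_{\langle j-1\rangle}$ have linear resolutions. The identical relation holds for the strongly stable ideal $\textnormal{gin}(I)$. Matching the two term by term and degree by degree against the Eliahou--Kervaire formula reduces everything to showing that $I_{\langle d\rangle}$ and $(\textnormal{gin}(I))_{\langle d\rangle}$ share their Betti numbers for each $d$; by the single-degree lemma this in turn reduces to the compatibility $\textnormal{gin}(I_{\langle d\rangle}) = (\textnormal{gin}(I))_{\langle d\rangle}$, which I expect to hold precisely because $I$ is componentwise linear. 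For the converse I would argue contrapositively: if some component fails to be linear, so that $\operatorname{reg}(I_{\langle d\rangle}) > d$ for some $d$, then the monotonicity $\beta_{i,j}(I_{\langle d\rangle}) \le \beta_{i,j}(\textnormal{gin}(I_{\langle d\rangle}))$ and the regularity bookkeeping produce a genuine Betti jump --- an extra minimal generator or syzygy of $\textnormal{gin}(I)$ in the relevant degree that $I$ does not carry --- contradicting $\beta_{i,j}(I) = \beta_{i,j}(\textnormal{gin}(I))$.

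The main obstacle is this compatibility between the component operation $I \mapsto I_{\langle d\rangle}$ and the reverse lexicographic generic initial ideal. The compatibility is \emph{false} in general: for the complete intersection $I = (x^2,y^3)$ one computes $\textnormal{gin}(I) = (x^2, xy^2, y^4)$, so that $\textnormal{gin}(I_{\langle 3\rangle}) = (x^3,x^2y,xy^2,y^4)$ differs from $(\textnormal{gin}(I))_{\langle 3\rangle} = (x^3,x^2y,xy^2)$, and correspondingly $I_{\langle 3\rangle}$ and $(\textnormal{gin}(I))_{\langle 3\rangle}$ even have different Hilbert functions. Hence the equality cannot be established formally and must instead be extracted from componentwise linearity itself. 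The leverage comes from the special behaviour of the reverse lexicographic order under generic hyperplane sections, encoded in the Bayer--Stillman regularity formula, which lets one run an induction on the number of variables and control how the linear strands of each component of $I$ are transported to the generators and syzygies of the strongly stable ideal $\textnormal{gin}(I)$. The reverse lexicographic hypothesis is essential: for other term orders the generic initial ideal does not track regularity degree by degree, and the equivalence breaks down.
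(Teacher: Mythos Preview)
The paper does not prove Theorem~\ref{thm:complingin}; it is quoted without proof as a result of Aramova, Herzog, and Hibi and then used as a black box (for instance in Proposition~\ref{prop:componentwiseequivalence} and Corollary~\ref{for:determinedwhencomplin}). There is therefore nothing in the paper to compare your proposal against.

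On the merits of your sketch: the overall architecture --- the degree-by-degree identity $\beta_{i,i+j}(I) = \beta_{i,i+j}(I_{\langle j\rangle}) - \beta_{i,i+j}(\mathfrak{m}\, I_{\langle j-1\rangle})$, applied in parallel to $I$ and to the strongly stable ideal $\textnormal{gin}(I)$ --- is indeed the route taken in the literature. You also correctly isolate the crux, namely that $\textnormal{gin}(I_{\langle d\rangle}) = (\textnormal{gin}(I))_{\langle d\rangle}$ is not a formal identity, and your counterexample is right. What you are missing is that your own ``single-degree comparison lemma'' already supplies the repair. One always has the inclusion $(\textnormal{gin}(I))_{\langle d\rangle} \subseteq \textnormal{gin}(I_{\langle d\rangle})$, since the two agree in degree $d$ and the left side is generated there. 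Under the componentwise-linear hypothesis, $\operatorname{reg}(I_{\langle d\rangle}) = d$, hence by Bayer--Stillman $\operatorname{reg}(\textnormal{gin}(I_{\langle d\rangle})) = d$; since $\textnormal{gin}(I_{\langle d\rangle})$ is strongly stable, its regularity equals the top degree of a minimal generator, so it too is generated in degree $d$ and the inclusion is an equality. That is exactly the missing step, and it uses nothing beyond what you already recorded. With it the forward direction closes; the converse then runs as you describe via consecutive cancellation.
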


%%%
\subsection{Generic initial ideals of fat point ideals}

When $I$ is the ideal of distinct points of $\mathbb{P}^2$, the reverse lexicographic generic initial ideals $\text{gin}(I^{(m)})$ have a very simple form detailed in the following proposition.  

\begin{prop}
\label{prop:formofgin}
Suppose that $I \subseteq K[x,y,z]$ is the ideal of a set of distinct points of $\mathbb{P}^2$.  Then the minimal generators of $\textnormal{gin}(I^{(m)})$ under the reverse lexicographic order are of the form 
$$\{ x^{\alpha(m)}, x^{\alpha(m) -1}y^{\lambda_{\alpha(m)-1}(m)}, \dots, xy^{\lambda_1(m)}, y^{\lambda_0(m)}\}$$
where $\lambda_0(m) > \lambda_1(m) > \cdots > \lambda_{\alpha(m)-1}(m) \geq 1$.
\end{prop}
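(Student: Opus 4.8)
The plan is to exploit the very special structure of the reverse lexicographic order together with the fact that $R/I^{(m)}$ has Krull dimension $1$ (the scheme is $0$-dimensional in $\mathbb{P}^2$, so its affine cone is a curve).

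First I would recall the key property of the reverse lexicographic order: for a generic choice of coordinates, the last variable $z$ is (after passing to the initial ideal) a non-zerodivisor on $R/\operatorname{gin}(I^{(m)})$ modulo the part supported on the irrelevant ideal. More precisely, since $\dim R/I^{(m)} = 1$ and the points are in generic position with respect to the coordinates, $z$ is a non-zerodivisor on $R/I^{(m)}$; by the fundamental property of revlex gins (see Green's ``Generic Initial Ideals'' or Bayer–Stillman), $z$ does not divide any minimal generator of $J := \operatorname{gin}(I^{(m)})$. Hence every minimal generator of $J$ lies in $K[x,y]$, which already gives that $J$ is ``generated in two variables.'' This is the first main step and I expect it to be the one requiring the most care to state cleanly, since it is really where the hypothesis (points, generic coordinates, revlex) is used.

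Second, I would use that $J$ is a \emph{Borel-fixed} (strongly stable) monomial ideal, being a generic initial ideal in characteristic $0$. Combined with the fact that its generators live in $K[x,y]$, strong stability forces the generating set into the claimed ``staircase'' shape: if $x^a y^b \in J$ then $x^{a+1}y^{b-1} \in J$, so among generators a larger power of $x$ forces a strictly smaller power of $y$. Thus the minimal generators can be listed as $x^{\alpha}, x^{\alpha-1} y^{\lambda_{\alpha-1}}, \dots, x y^{\lambda_1}, y^{\lambda_0}$ with $\lambda_0 > \lambda_1 > \cdots > \lambda_{\alpha-1}$. Here $x^{\alpha}$ appears because $J \cap K[x]$ is nonzero (again by strong stability, since $J$ is nonzero and $x$ is the largest variable), and $y^{\lambda_0}$ appears as the generator of $J \cap K[y]$, which is nonzero because the image of $y$ is nilpotent in $R/J$ (as $z$ is the unique non-zerodivisor and $\dim = 1$). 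The strict inequalities $\lambda_i > \lambda_{i+1}$ and $\lambda_{\alpha-1} \ge 1$ follow from minimality of the generating set: if $\lambda_i \le \lambda_{i+1}$ then $x^i y^{\lambda_i}$ would divide $x^{i+1} y^{\lambda_{i+1}}$, contradicting minimality (and if $\lambda_{\alpha-1} = 0$ then $x^{\alpha-1}$ would be in $J$, contradicting that $\alpha$ is the least power of $x$ in $J$).

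The only genuinely delicate point is the first step — justifying that $z$ divides no minimal generator of the revlex gin. I would handle this by citing the standard result that for a homogeneous ideal $I$ and a generic linear form, the revlex gin of $I$ has the property that the variables can be "peeled off" in order: $x_n$ is a non-zerodivisor on $R/\operatorname{gin}(I)$ iff it is on $R/(\operatorname{gin}(I) : x_n^\infty)$, and more usefully that $\operatorname{depth}(R/\operatorname{gin}(I)) = \operatorname{depth}(R/I)$ with the depth realized by the last variables. Since $I^{(m)}$ is a one-dimensional Cohen–Macaulay ring (the fat point scheme is arithmetically Cohen–Macaulay as a subscheme of $\mathbb{P}^2$), $z$ is a non-zerodivisor on $R/I^{(m)}$ for generic coordinates, hence on $R/\operatorname{gin}(I^{(m)})$, and for a Borel-fixed ideal this is equivalent to $z$ not dividing any minimal generator. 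Everything after that is the purely combinatorial bookkeeping of strongly stable ideals in $K[x,y]$ sketched above.
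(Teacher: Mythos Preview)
Your proposal is correct and follows essentially the same route as the paper: the paper simply invokes that generic initial ideals are Borel-fixed and that $I^{(m)}$ (hence $\operatorname{gin}(I^{(m)})$) is saturated, citing an external reference for details. Your argument via arithmetic Cohen--Macaulayness and Bayer--Stillman depth preservation is just a slightly more explicit way of obtaining the same key fact---that $z$ divides no minimal generator of the revlex gin---after which the staircase description from strong stability is identical.
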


This follows from the fact that generic initial ideals are Borel-fixed and  the ideals $I^{(m)}$ and $\text{gin}(I^{(m)})$ is saturated; see Corollary 2.9 of \cite{Mayesgenericpoints} for a proof.  The following corollary now follows from Theorem \ref{thm:complingin} and Proposition \ref{prop:formofgin}.

\begin{cor}
\label{for:determinedwhencomplin}
Let $I$ be the ideal of a set of distinct points in $\mathbb{P}^2$ and $m$ be an integer such that $I^{(m)}$ is componentwise linear.  The generators of $\textnormal{gin}(I^{(m)})$ are completely determined by the degrees of the minimal generators of $I^{(m)}$.
\end{cor}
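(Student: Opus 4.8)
The plan is to combine the numerical equality of Betti numbers coming from componentwise linearity with the rigid combinatorial form of $\textnormal{gin}(I^{(m)})$ given by Proposition~\ref{prop:formofgin}.

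First I would invoke Theorem~\ref{thm:complingin}: since $I^{(m)}$ is componentwise linear, $I^{(m)}$ and $\textnormal{gin}(I^{(m)})$ have the same graded Betti numbers, so in particular $\beta_{0,j}(\textnormal{gin}(I^{(m)})) = \beta_{0,j}(I^{(m)})$ for all $j$. Consequently the multiset $D$ of degrees of the minimal generators of $\textnormal{gin}(I^{(m)})$ (with $\beta_{0,j}$ recording the multiplicity of the degree $j$) agrees with the corresponding multiset for $I^{(m)}$. It therefore suffices to show that $D$ alone determines which monomials occur in the generating set described in Proposition~\ref{prop:formofgin}.

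Next, write $\textnormal{gin}(I^{(m)}) = (x^{\alpha}, x^{\alpha-1}y^{\lambda_{\alpha-1}}, \dots, xy^{\lambda_1}, y^{\lambda_0})$ with $\lambda_0 > \lambda_1 > \cdots > \lambda_{\alpha-1} \geq 1$, and let $d_i$ denote the degree of the generator with $x$-exponent $i$, so $d_i = i + \lambda_i$ for $0 \leq i \leq \alpha-1$ and $d_{\alpha} = \alpha$. From $\lambda_i \geq \lambda_{i+1} + 1$ and $\lambda_{\alpha-1} \geq 1$ one gets $d_0 \geq d_1 \geq \cdots \geq d_{\alpha-1} \geq d_{\alpha} = \alpha$; that is, the sequence $(d_0, \dots, d_{\alpha})$ is already weakly decreasing. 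Hence the elements of $D$, arranged in weakly decreasing order, are precisely $d_0 \geq d_1 \geq \cdots \geq d_{\alpha}$. Reading this off, $\alpha = |D| - 1$ is determined, and $\lambda_i = d_i - i$ for $0 \leq i \leq \alpha-1$; since $\alpha$ and the $\lambda_i$ determine the monomials $x^{\alpha}$ and $x^i y^{\lambda_i}$, the generators of $\textnormal{gin}(I^{(m)})$ are completely determined by $D$, and therefore by the degrees of the minimal generators of $I^{(m)}$.

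The only genuinely delicate point I anticipate is the monotonicity check in the previous step, together with the observation that it resolves any apparent ambiguity: when two consecutive degrees $d_i = d_{i+1}$ coincide, the reconstruction $\lambda_i = d_i - i$ still depends only on the position $i$ in the sorted list and not on which generator is placed there, so the recovery of the staircase is unambiguous. Everything else is bookkeeping with Proposition~\ref{prop:formofgin} and Theorem~\ref{thm:complingin}.
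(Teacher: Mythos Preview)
Your argument is correct and is exactly the approach the paper intends: the paper simply states that the corollary ``follows from Theorem~\ref{thm:complingin} and Proposition~\ref{prop:formofgin}'' without further detail, and your proof is a faithful unpacking of how those two results combine. The monotonicity check $d_0 \geq d_1 \geq \cdots \geq d_\alpha$ that you carry out is precisely the missing computation needed to see that the multiset of generator degrees determines the staircase exponents $\lambda_i$.
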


%\begin{proof}
%By a result of Aramova, Herzog, and Hibi (cite ideals with stable betti numbers), an ideal $J$ is strongly stable if and only if $J$ and $\text{gin}(J)$ have the same Betti numbers.  In particular, the degrees of the generators of $\text{gin}(I^{(m)})$ are the same as those of $I^{(m)}$.  The result follows by observing that the invariants 
%\end{proof}

%%%
\subsection{Blow-ups of Points in $\mathbb{P}^2$}
\label{sec:surfacebackground}
The algorithms that we will use to prove Theorems \ref{thm:limitingshapelpointson} and \ref{thm:componentwiselinearlpointson} come from \cite{Harbourne98}  and are very similar to the procedures outlined in \cite{Mayes6points}.   The key to these algorithms is to consider divisors on the blow-ups of the point arrangements that we are considering.  

Suppose that $\pi: X \rightarrow \mathbb{P}^2$ is the blow-up of distinct points $p_1, \dots, p_r$ of $\mathbb{P}^2$. Let $E_i = \pi^{-1}(p_i)$ for $i = 1, \dots, r$ and let $L$ be the total transform in $X$ of a line  not passing through any of the points $p_1, \dots, p_r$.  The classes of these divisors form a basis of $\text{Cl}(X)$; for convenience, we will write $e_i$ in place of $[E_i]$ and $e_0$ in place of $[L]$.  Further, the intersection product in $\text{Cl}(X)$ is defined by $e_i^2 = -1$ for $i=1, \dots, r$; $e_0^2 = 1$; and $e_i \cdot e_j = 0$ for all $i\neq j$.

Let $Z = m(p_1+\cdots +p_r)$ be a uniform fat point subscheme with sheaf of ideals $\mathcal{I}_Z$; set 
$${F}_d = dE_0 - m(E_1 + E_2 + \cdots +E_r)$$
and $\mathcal{F}_d = \mathcal{O}_X(F_d)$.  
Much of our interest in the blow-ups comes from the fact that the Hilbert function of $I^{(m)}$ is related to the divisors $F_d$ (see \cite{Mayesgenericpoints}):
$$h^0(X, \mathcal{F}_d) = H_{I^{(m)}}(d).$$
For convenience, we will sometimes write $h^0(X, F) = h^0(X, \mathcal{O}_X(F))$.  Recall that if  $[{F}]$ is not the class of an effective divisor then $h^0(X, {F}) = 0$.  On the other hand, if $F$ is effective, then we will see that we can compute $h^0(X,{F})$ by finding $h^0(X,{H})$ for some \textit{numerically effective} divisor $H$.  %A reference to this is p. 3 of "Resolutions of Fat Point Ideals in P^2$ paper.

\begin{defn}
A divisor $H$ is \textbf{numerically effective} if $[F] \cdot [H] \geq 0$ for every effective divisor $F$.  The cone of classes of numerically effective divisors in $\text{Cl}(X)$ is denoted NEF($X$).
\end{defn}

\begin{lem}
\label{lem:h0ofNEFF}
Suppose that $X$ is the blow-up of $\mathbb{P}^2$ at distinct points $p_1, \dots, p_r$.  Let $F \in \textnormal{NEF}(X)$.  Then $F$ is effective and 
$$h^0(X, F)  = ([F]^2-[F]\cdot [K_X])/2+1$$
where $K_X = -3E_0 + E_1 + \cdots + E_r$.
\end{lem}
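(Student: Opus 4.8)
The plan is to run Riemann--Roch on $X$ and then show that the two higher cohomology groups of $\mathcal{O}_X(F)$ vanish. Since $X$ is obtained from $\mathbb{P}^2$ by a sequence of point blow-ups it is a smooth projective rational surface, so $\chi(\mathcal{O}_X)=\chi(\mathcal{O}_{\mathbb{P}^2})=1$ and Riemann--Roch reads
$$h^0(X,F)-h^1(X,F)+h^2(X,F)=1+\tfrac{1}{2}\bigl([F]^2-[F]\cdot[K_X]\bigr).$$
Thus, once I establish $h^1(X,F)=h^2(X,F)=0$, the asserted formula for $h^0(X,F)$ is immediate; effectivity of $F$ then follows because the right-hand side is at least $1$ for a numerically effective $F$ (equivalently $\binom{d+2}{2}\ge 1+\sum_i\binom{m_i+1}{2}$, which is forced by the numerical-effectiveness inequalities).

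The vanishing of $h^2$ is straightforward. By Serre duality $h^2(X,F)=h^0(X,\,K_X-F)$. The class $e_0=[L]$ is the pull-back under $\pi$ of the class of a line on $\mathbb{P}^2$, hence $e_0\in\mathrm{NEF}(X)$, and it is effective. Since $F$ is numerically effective, $e_0\cdot F\ge 0$, and therefore
$$e_0\cdot(K_X-F)=e_0\cdot K_X-e_0\cdot F=-3-e_0\cdot F\le -3<0.$$
A class meeting the numerically effective class $e_0$ negatively cannot be effective, so $h^0(X,K_X-F)=0$.

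The substantive point --- and the step I expect to be the main obstacle --- is the vanishing of $h^1(X,F)$. Here I would push forward along $\pi\colon X\to\mathbb{P}^2$. Writing $F=d\,e_0-\sum_{i=1}^r m_i e_i$, numerical effectiveness of $F$ against the effective classes $e_0$ and $e_i$ gives $d=F\cdot e_0\ge 0$ and $m_i=F\cdot e_i\ge 0$; hence $\mathcal{O}_X(F)$ restricts to $\mathcal{O}_{\mathbb{P}^1}(m_i)$ with $m_i\ge 0$ on each fibre $E_i$ of $\pi$, so $R^1\pi_*\mathcal{O}_X(F)=0$ (and $R^q\pi_*=0$ for $q\ge 2$, the fibres being curves). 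The Leray spectral sequence then identifies $H^1(X,\mathcal{O}_X(F))$ with $H^1(\mathbb{P}^2,\mathcal{I}_Z(d))$, where $Z=m_1p_1+\cdots+m_rp_r$; so $h^1(X,F)=0$ is precisely the statement that $Z$ imposes independent conditions on forms of degree $d$. This is where the geometry of the configuration enters, through the inequalities $m_i+m_j\le d$ coming from the strict transforms of the lines $\overline{p_ip_j}$ (and their higher-degree analogues for conics, cubics, and so on), and the required vanishing is the one furnished by the divisor-class algorithms of \cite{Harbourne98}. Granting it, Riemann--Roch gives $h^0(X,F)=1+\tfrac12([F]^2-[F]\cdot[K_X])=\binom{d+2}{2}-\sum_i\binom{m_i+1}{2}$, which is positive, so $F$ is effective. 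Everything outside the $h^1$ vanishing is formal manipulation of Riemann--Roch, Serre duality, and the Leray spectral sequence.
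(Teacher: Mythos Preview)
Your approach is essentially the paper's: invoke Riemann--Roch and reduce everything to the vanishing of $h^1(X,F)$ for nef $F$, which both you and the paper take from \cite{Harbourne98} (Lemmas II.2 and III.i.1(b) there). You supply more detail than the paper's two-line sketch---the Serre-duality argument for $h^2=0$ and the Leray reduction of $h^1$ to a postulation statement---but the substance is the same, and you correctly identify the single nontrivial input as Harbourne's $h^1$ vanishing.
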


\begin{proof}
This is a consequence of Riemann-Roch and the fact that $h^1(X, F) = 0$ for any numerically effective divisor $F$ on $X$. See Lemmas III.i.1(b) and II.2 of \cite{Harbourne98} for a discussion.
\end{proof}

Knowing how to compute $h^0(X, H)$ for a numerically effective divisor $H$ will allow us to compute $h^0(X, F)$ for \textit{any} divisor $F$.  In particular, given a divisor $F$, there exists a divisor $H$ such that $h^0(X, F) = h^0(X, H)$ and either: 
\begin{enumerate}
\item[(a)]  $H$ is numerically effective so $$h^0(X, F) = h^0(X, H) = ([H]^2-[H]\cdot [K_X])/2+1$$ by Lemma \ref{lem:h0ofNEFF}; or
\item[(b)]  there is a numerically effective divisor $G$ such that $[G]\cdot [H] <0$ so $[H]$ is not the class of an effective divisor and $h^0(X, F) = h^0(X, H) = 0$.
\end{enumerate}

The set of classes of effective, reduced, and irreducible curves of negative self-intersection in $X$ is used to find such an $H$; it is denoted
$$\text{NEG}(X) := \{ [C] \in \text{Cl}(X): [C]^2 < 0 , C \text{ is effective, reduced, and irreducible}\}.$$

\begin{lem}
\label{lem:Equalh0}
Suppose that $[C]  \in \textnormal{NEG}(X)$ is such that $[F] \cdot [C] <0$.  Then $h^0(X, F) = h^0(X, F-C)$.
\end{lem}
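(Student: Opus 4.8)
The plan is to compare the global sections of $\mathcal{O}_X(F)$ and $\mathcal{O}_X(F-C)$ by restricting to the curve $C$, using the hypothesis $[F]\cdot[C]<0$ to kill the obstruction. Concretely, I would tensor the structure sequence of $C\subset X$,
$$0 \longrightarrow \mathcal{O}_X(-C) \longrightarrow \mathcal{O}_X \longrightarrow \mathcal{O}_C \longrightarrow 0,$$
with the line bundle $\mathcal{O}_X(F)$ to obtain
$$0 \longrightarrow \mathcal{O}_X(F-C) \longrightarrow \mathcal{O}_X(F) \longrightarrow \mathcal{O}_X(F)\otimes\mathcal{O}_C \longrightarrow 0,$$
and then pass to the associated long exact sequence in cohomology, which begins
$$0 \longrightarrow H^0(X, F-C) \longrightarrow H^0(X, F) \longrightarrow H^0(C, \mathcal{O}_X(F)\otimes\mathcal{O}_C).$$
At this point it suffices to show that the last group vanishes: then the inclusion $H^0(X,F-C)\hookrightarrow H^0(X,F)$ is an isomorphism, which is exactly the claimed equality $h^0(X,F)=h^0(X,F-C)$.

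For the vanishing, I would argue as follows. The degree of the restricted line bundle $\mathcal{O}_X(F)\otimes\mathcal{O}_C$ on $C$ is the intersection number $[F]\cdot[C]$, which is negative by hypothesis. Since $[C]\in\textnormal{NEG}(X)$, the curve $C$ is reduced and irreducible, hence an integral projective curve, and a line bundle of negative degree on such a curve has no nonzero global sections: a nonzero section would be a nonzerodivisor (integrality), giving an inclusion $\mathcal{O}_C\hookrightarrow\mathcal{O}_X(F)\otimes\mathcal{O}_C$ and forcing the degree to be $\geq\deg\mathcal{O}_C=0$, a contradiction. Therefore $H^0(C,\mathcal{O}_X(F)\otimes\mathcal{O}_C)=0$, and the lemma follows.

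I do not expect a genuine obstacle here; the one point that deserves care is that $C$ need not be smooth, so one cannot simply quote Riemann--Roch for smooth curves — but the negative-degree vanishing statement used above is valid for arbitrary integral projective curves, which is all that is needed. (This is, in essence, the content of Lemmas II.2 and III.i.1 of \cite{Harbourne98}, so one could alternatively dispatch the lemma by citing that source.)
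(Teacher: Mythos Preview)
Your argument is correct and is precisely the standard proof of this fact: the ideal sheaf sequence for $C$ twisted by $\mathcal{O}_X(F)$, together with the vanishing of $H^0$ for a negative-degree line bundle on an integral projective curve, yields the isomorphism $H^0(X,F-C)\cong H^0(X,F)$. Your remark that reducedness and irreducibility of $C$ (rather than smoothness) is all that is needed for the vanishing step is exactly the right level of care.

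The paper itself does not supply a proof of this lemma; it simply records the statement as background, in the same spirit as the surrounding material drawn from \cite{Harbourne98}. So there is nothing to compare against beyond the citation you already mention at the end of your proposal. Your written-out argument is what one would expect if a proof were demanded, and it is complete as stated.
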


We have the following enumeration of the elements of $\text{NEG}(X)$ from Lemma III.i.1(c) of \cite{Harbourne98}.

\begin{lem}[{\cite{Harbourne98}}]
\label{lem:negelements}
Let $X$ the be blow-up of points $p_1, \dots, p_{l+1} \in \mathbb{P}^2$ where $p_1, \dots, p_l$ lie on a line and $p_{l+1}$ lies off of that line.  Then 
$$\textnormal{NEG}(X) = \{ e_0-e_1-\cdots -e_l, e_0-e_i-e_{l+1} \text{ for } i=1, \dots, l,  e_i \text{ for } i=1, \dots, l+1 \}.$$

\end{lem}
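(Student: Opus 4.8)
The plan is to verify that each of the listed classes lies in $\mathrm{NEG}(X)$, and then to rule out any other class by a short numerical argument combining the adjunction formula with B\'ezout's theorem against the line $L$.

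For the listed classes: each exceptional curve $E_i \cong \mathbb{P}^1$ ($i = 1, \dots, l+1$) is reduced, irreducible and effective with $e_i^2 = -1 < 0$. The strict transform $\widetilde L$ of the line $L$ through $p_1, \dots, p_l$ represents $e_0 - e_1 - \cdots - e_l$, and $(e_0 - e_1 - \cdots - e_l)^2 = 1 - l < 0$ because $l > 2$. For $i = 1, \dots, l$ the line $\overline{p_i p_{l+1}}$ passes through $p_i$ and $p_{l+1}$ but through no $p_j$ with $j \le l$, $j \ne i$ (otherwise it would coincide with $L$ and force $p_{l+1} \in L$), so its strict transform represents $e_0 - e_i - e_{l+1}$, which has square $-1 < 0$. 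Hence the whole list lies in $\mathrm{NEG}(X)$.

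Conversely, let $[C] \in \mathrm{NEG}(X)$ with $C$ reduced, irreducible, effective and $C^2 \le -1$. If $\pi$ contracts $C$ then $C = E_i$ for some $i$. Otherwise $C$ is the strict transform of an irreducible plane curve of degree $d \ge 1$, and with $m_i = \mathrm{mult}_{p_i}\,\pi(C) \ge 0$ we have $[C] = d e_0 - \sum_{i=1}^{l+1} m_i e_i$, so $C^2 = d^2 - \sum m_i^2$ and $C \cdot K_X = -3d + \sum m_i$, where $K_X = -3 e_0 + e_1 + \cdots + e_{l+1}$. The adjunction formula on the smooth surface $X$ gives $C^2 + C \cdot K_X = 2 p_a(C) - 2 \ge -2$ since $p_a(C) \ge 0$ for the reduced irreducible curve $C$; together with $C^2 \le -1$ this forces $C \cdot K_X \ge -1$, i.e. $\sum_{i=1}^{l+1} m_i \ge 3d - 1$. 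If $C \ne \widetilde L$, B\'ezout's theorem applied to $\pi(C)$ and $L$ gives $\sum_{i=1}^{l} m_i \le d$, while $m_{l+1} \le d$ in all cases; hence $3d - 1 \le \sum_{i=1}^{l+1} m_i \le d + m_{l+1} \le 2d$, so $d \le 1$. (If instead $C = \widetilde L$ we already have one of the listed classes.)

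It remains to handle $d = 1$, where $C$ is the strict transform of a line, every $m_i \in \{0,1\}$, and $C^2 = 1 - \#\{\, i : m_i = 1 \,\} < 0$ forces the line to pass through at least two of the points. A line through two of $p_1, \dots, p_l$ equals $L$ and misses $p_{l+1}$, giving $e_0 - e_1 - \cdots - e_l$; a line through $p_{l+1}$ and exactly one $p_i$ gives $e_0 - e_i - e_{l+1}$; a line through $p_{l+1}$ and two of the $p_i$ would force $p_{l+1} \in L$ and cannot occur. This exhausts all possibilities and recovers precisely the stated list. The only genuinely delicate point is the numerical step of the previous paragraph — invoking $p_a(C) \ge 0$ and B\'ezout correctly and separating off the case $C = \widetilde L$ — after which the classification is routine bookkeeping about the geometry of $l$ collinear points together with one further point.
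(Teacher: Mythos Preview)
The paper does not supply its own proof of this lemma; it simply attributes the result to Harbourne and uses it as a black box. Your proposal gives a correct and self-contained argument. The inclusion of the listed classes in $\mathrm{NEG}(X)$ is immediate from the geometry, and your exclusion step---combining the adjunction inequality $C^2 + C\cdot K_X \ge -2$ with B\'ezout's theorem against the line $L$ (after separating off the case $C=\widetilde L$) to force $d \le 1$---is the standard route to classifying negative curves on such a blow-up. One minor remark: your justification ``because $l>2$'' for $1-l<0$ is stronger than necessary; $l\ge 2$ already gives $1-l<0$, and the lemma (and your argument) hold in that case as well.
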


The method for finding an $H$ satisfying (a) or (b) above is as follows. 

\begin{procedure}[{Remark 2.4 of \cite{GH07}}]
\label{proc:findH}
Given a divisor $F$ we can find a divisor $H$ with $h^0(X, F) = h^0(X, H)$ satisfying either condition (a) or (b) above as follows.
\begin{enumerate}
\item Reduce to the case where $[F] \cdot e_i \geq 0$ for all $i=1, \dots, n$:  if $[F]\cdot e_i <0$ for some $i$, $h^0(X, F) = h^0(X, F-([F]\cdot e_i)E_i)$, so we can replace $F$ with $F - ([F]\cdot e_i) E_i$.
\item Since $L$ is numerically effective, if $[F]\cdot e_0<0$ then $[F]$ is not the class of an effective divisor and we can take $H=F$ (case (b)).
\item If $[F] \cdot [C] \geq 0$ for every $[C] \in \text{NEG}(X)$ then, by Lemma \ref{lem:negelements}, $F$ is numerically effective, so we may take $H = F$ (case (a)).
\item If $[F] \cdot [C] <0$ for some $[C]\in \text{NEG}(X)$ then $h^0(X, F) = h^0(X, F-C)$ by Lemma \ref{lem:Equalh0}. Replace $F$ with $F-C$ and repeat from Step 2.  
\end{enumerate}
\end{procedure}

There are only a finite number of elements in $\text{NEG}(X)$ to check by Lemma \ref{lem:negelements} so it is possible to complete Step 3.  Further, $[F] \cdot e_0 > [F-C] \cdot e_0$ when $[C] \in \text{NEG}(X)$, so the condition in Step 2 will be satisfied after at most $[F]\cdot e_0 +1$ repetitions.  Thus, the process will terminate.

Denote the number of minimal generators of $I^{(m)}$ of degree $d$ by $v_d(I^{(m)})$. Then
\begin{eqnarray*}
v_{d+1}(I^{(m)}) &=& \text{dim}(\text{coker}(  (I^{(m)})_d \otimes R_1 \rightarrow (I^{(m)})_{d+1}  ))\\
&=& \text{dim}(  \text{coker} ( H^0(X, \mathcal{F}_d) \otimes H^0(X, e_0) \rightarrow H^0(X, \mathcal{F}_{d+1}) ))\\
&:=& s(\mathcal{F}_d, e_0)
\end{eqnarray*}

If $[{F}_d]$ is not the class of an effective divisor then $h^0(X, \mathcal{F}_d)=0$ and 
\begin{eqnarray}
\label{eqn:snoteff}
s(\mathcal{F}_d, e_0) = h^0(X, \mathcal{F}_{d+1}).
\end{eqnarray}  

If $[{F}_d]$ is the class of an effective divisor let $H_d$ be the numerically effective divisor produced by Procedure \ref{proc:findH}.  Then 
$$s(\mathcal{F}_d, e_0) = s(\mathcal{H}_d, e_0) + h^0(X, \mathcal{F}_{d+1}) - h^0(X, \mathcal{H}+e_0)$$
by Lemma II.10 of \cite{Harbourne98}.  Further, since ${H}$ is numerically effective by definition, $s(\mathcal{H}_d, e_0)=0$ when the points $p_1, \dots, p_r$  lie on a conic by Theorem III.i.2 of \cite{Harbourne98}.  Thus, in the cases we are interested in, 
\begin{eqnarray}
\label{eqn:seff}
s(\mathcal{F}_d, e_0) = h^0(X, \mathcal{F}_{d+1}) - h^0(X, \mathcal{H}+e_0).
\end{eqnarray}

Therefore, to find the number of generators $s(\mathcal{F}_d, e_0)$ of each degree $d+1$, we will proceed as follows.\footnote{Other algorithms for determining the number of generators are possible for the cases we are considering.  For example,  the techniques from \cite{CHT11} yield the same results.}
\begin{enumerate}[(a)]
\item Follow Procedure \ref{proc:findH} to determine if ${F}_d$ is effective or non-effective.  If ${F}_d$ is effective, the procedure will yield a numerically effective divisor ${H}_d$. 
\item Compute $v_{d+1}(I^{(m)}) = s(\mathcal{F}_d, e_0)$ for each $d$ using expressions (\ref{eqn:snoteff}) or (\ref{eqn:seff}) together with Procedure \ref{proc:findH} and the formula from Lemma \ref{lem:h0ofNEFF}.
\end{enumerate}

\section{Generators of $I^{(m)}$ and Componentwise Linearity}
\label{sec:genscomplin}

Throughout this section $R = K[x,y,z]$ is a polynomial ring over a field of characteristic zero with the reverse lexicographic order and $I$ is the ideal of a point configuration $\{ p_1, \dots, p_{l+1} \}$ where $p_1, \dots, p_l$ lie on a single line and $p_{l+1}$ is off of that line.  The purpose of this section is to enumerate the generators of the fat point ideals $I^{(m)}$ and, in doing so, prove the following theorem.

\begin{thm}
\label{thm:componentwiselinearlpointson}
Let $I \subseteq K[x,y,z]$ be an ideal of $l+1$ points where $l$ points  lie on a single line.  Then an infinite number of the uniform fat point ideals $I^{(m)}$ are componentwise linear.  In particular, $I^{(m)}$ is componentwise linear when $l(l-1)$ divides $m$.
\end{thm}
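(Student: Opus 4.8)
The plan is to compute, for each relevant $m$, the degrees and number of minimal generators of $I^{(m)}$ by running the divisor-theoretic machinery of Section \ref{sec:background}, and then to compare the resulting Betti numbers with those of $\textnormal{gin}(I^{(m)})$ using Theorem \ref{thm:complingin}. Concretely, I would fix $m$ with $l(l-1) \mid m$, write $m = l(l-1)k$, and for each degree $d$ apply Procedure \ref{proc:findH} to the divisor $F_d = dE_0 - m(E_1 + \cdots + E_{l+1})$. Since $\textnormal{NEG}(X)$ is explicitly finite by Lemma \ref{lem:negelements}, the reduction $F_d \rightsquigarrow H_d$ only ever subtracts off the curves $e_0 - e_1 - \cdots - e_l$, $e_0 - e_i - e_{l+1}$, and $e_i$; the divisibility hypothesis on $m$ is exactly what makes the bookkeeping of how many times each curve is subtracted come out to integers with no ``boundary'' irregularities, so the $H_d$ and hence (via Lemma \ref{lem:h0ofNEFF} and formulas (\ref{eqn:snoteff})--(\ref{eqn:seff})) the values $v_{d+1}(I^{(m)}) = s(\mathcal{F}_d, e_0)$ can be written down in closed form as functions of $d$ and $k$. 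This identifies the degrees in which $I^{(m)}$ has generators and how many it has in each such degree.

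Next I would feed this generator data into Proposition \ref{prop:formofgin}: once we know the degrees of the minimal generators of $I^{(m)}$, the shape $\{x^{\alpha}, x^{\alpha-1}y^{\lambda_{\alpha-1}}, \dots, y^{\lambda_0}\}$ of $\textnormal{gin}(I^{(m)})$ constrains the exponents $\lambda_i$, and I would pin them down completely. From the monomial (hence Borel-fixed, hence Eliahou--Kervaire-resolved) ideal $\textnormal{gin}(I^{(m)})$ I can read off its graded Betti numbers directly. In parallel, componentwise linearity would be established by checking Definition \ref{defn:componentwiselinear} ``from the ideal side'': one shows each truncation $((I^{(m)})_j)$ has a linear resolution. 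The cleanest route is probably to verify that $I^{(m)}$ and $\textnormal{gin}(I^{(m)})$ have the same Betti numbers directly — i.e. that the generators of $I^{(m)}$ computed above, together with the (unique, forced) syzygy structure implied by the ginideal, actually match — invoking Theorem \ref{thm:complingin} to conclude componentwise linearity. Since Corollary \ref{for:determinedwhencomplin} says the generators of $\textnormal{gin}(I^{(m)})$ are determined by the degrees of the generators of $I^{(m)}$ once we know componentwise linearity holds, there is some circularity to avoid here: the honest argument must first independently certify that the Betti numbers agree (e.g. by bounding the Betti numbers of $I^{(m)}$ below by those of $\textnormal{gin}(I^{(m)})$ and then showing the generator counts already force equality, perhaps via a Hilbert-series / regularity computation showing $\textnormal{reg}(I^{(m)})$ equals the top generator degree).

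The main obstacle I anticipate is precisely this last point: extracting enough information about the \emph{syzygies} (not just the generators) of $I^{(m)}$ to conclude the Betti numbers match those of $\textnormal{gin}(I^{(m)})$. The divisor machinery as set up computes $h^0$, hence Hilbert functions and generator counts $v_d$, but higher Betti numbers of $I^{(m)}$ are not immediately visible. I expect the resolution of this to be: (i) compute the Hilbert function of $I^{(m)}$ in closed form from the $h^0(X, \mathcal{F}_d)$; (ii) observe that $\textnormal{gin}(I^{(m)})$, being a specific stable monomial ideal, has a known Hilbert function and known Betti numbers; (iii) since $I^{(m)}$ and $\textnormal{gin}(I^{(m)})$ share a Hilbert function and the generators occur in the same degrees with the same multiplicities, and since for ideals with this generator pattern the Betti numbers are forced by the Hilbert function (an argument in the spirit of Eliahou--Kervaire applied to the possible cancellations), conclude equality. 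The divisibility condition $l(l-1) \mid m$ should be what guarantees the generator degrees land in a ``generic'' pattern with no collisions forcing extra Betti numbers, so the bulk of the work is the explicit, if routine, case analysis in Procedure \ref{proc:findH} followed by verifying the no-cancellation claim.
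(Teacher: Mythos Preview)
Your overall plan---run Procedure \ref{proc:findH} on the divisors $F_d$ to compute the generator degrees and multiplicities $v_d(I^{(m)})$, then compare Betti numbers with $\textnormal{gin}(I^{(m)})$ via Theorem \ref{thm:complingin}---is exactly what the paper does. The divisor computation is carried out in Sections \ref{sec:Zardecomp}--\ref{sec:gensofIm} essentially as you outline, and the divisibility hypothesis $l(l-1)\mid m$ plays the role you predict.

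Where you diverge from the paper is in resolving the ``obstacle'' you flag. You are right that the machinery only gives $h^0$ and hence generator counts, not higher syzygies. But you are overcomplicating the escape. The paper's key observation is Proposition \ref{prop:componentwiseequivalence}: because $\textnormal{gin}(I^{(m)})$ is generated in two variables, it has projective dimension one, so \emph{every} consecutive cancellation in passing from the Betti table of $\textnormal{gin}(I^{(m)})$ to that of $I^{(m)}$ must cancel a generator against a first syzygy. Hence $I^{(m)}$ and $\textnormal{gin}(I^{(m)})$ have identical Betti numbers if and only if they have the same \emph{total number} of minimal generators. And the number of generators of $\textnormal{gin}(I^{(m)})$ is forced: by Borel-fixedness (Proposition \ref{prop:formofgin}) it is exactly $\alpha(m)+1$, where $\alpha(m)$ is the initial degree. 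So the whole question reduces to a single numerical check: does the total generator count $\sum_d v_d(I^{(m)})$ equal $\alpha(m)+1$? The paper computes $\alpha(m) = 2m - m/l$ and then tallies the $v_d$ from the divisor calculation to get $2m - m/l + 1$, finishing the proof.

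Your proposed routes through regularity, Hilbert series, or directly pinning down the $\lambda_i$ would work in principle but are unnecessary, and your parenthetical about ``bounding the Betti numbers of $I^{(m)}$ below by those of $\textnormal{gin}(I^{(m)})$'' has the inequality reversed (the gin gives upper bounds). The clean argument is the generator-count criterion above; once you see that, there is no circularity and no need to touch syzygies at all.
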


The following proposition gives a specific criterion for an ideal of fat points to be componentwise linear.

\begin{prop}
\label{prop:componentwiseequivalence}
Let $J$ be a homogeneous ideal of $K[x_1, \dots, x_n]$ such that the reverse lexicographic generic initial ideal $\textnormal{gin}(J)$ is generated in two variables.  If $\alpha$ is the degree of the smallest degree generator of $J$ and
$$\alpha = \{ \text{number of minimal generators of } J \}-1$$
then $J$ is componentwise linear.
\end{prop}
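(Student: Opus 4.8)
The plan is to verify the Aramova--Herzog--Hibi criterion of Theorem~\ref{thm:complingin}: it suffices to show that $J$ and $G := \textnormal{gin}(J)$ have the same graded Betti numbers. Since passing to a reverse-lexicographic generic initial ideal can only enlarge Betti numbers, one has $\beta_{i,j}(J) \le \beta_{i,j}(G)$ for all $i,j$; hence it is enough to prove that the \emph{total} Betti numbers $\beta_i(J)$ and $\beta_i(G)$ agree for every $i$, because equality of totals together with the termwise inequality forces equality of every graded Betti number. We may assume $J$ is a proper nonzero ideal, so $\alpha \ge 1$ (the case $J = R$ being trivial).

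First I would pin down the minimal generators of $G$. As a reverse-lexicographic generic initial ideal in characteristic zero, $G$ is Borel-fixed (strongly stable), and by hypothesis its minimal generators lie in $K[x_a,x_b]$ for two variables $x_a,x_b$; list them as a staircase $x_a^{c_0}x_b^{d_0}, \dots, x_a^{c_s}x_b^{d_s}$ with $c_0 < \cdots < c_s$ and $d_0 > \cdots > d_s \ge 0$. Applying the strong-stability move that replaces $x_b$ by $x_a$ to the generator of $x_a$-degree $c_i$ (for $i<s$, where $d_i \ge 1$) yields a monomial of $G$ which forces $c_{i+1} = c_i + 1$ and $d_{i+1} \le d_i - 1$; thus the $x_a$-degrees of the minimal generators form a run of consecutive integers, and the inequalities $d_i \ge d_s + (s-i)$ give $c_s + d_s \le c_i + d_i$ for all $i$, so $c_s$ is at most the least degree of a minimal generator of $G$. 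Since $J$ and $G$ have the same Hilbert function, that least degree equals $\alpha$; hence $s+1 \le c_s + 1 \le \alpha + 1$, i.e. $G$ has at most $\alpha+1$ minimal generators. On the other hand $\beta_0(G) \ge \beta_0(J) = \alpha + 1$, so both estimates are equalities: $c_0 = 0$, $c_s = \alpha$, $d_s = 0$, and $\beta_0(G) = \alpha + 1 = \beta_0(J)$. In particular $G$ has the two-variable staircase form of Proposition~\ref{prop:formofgin} and $\{x_a,x_b\}$ are the two largest variables.

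Next I would bound projective dimensions and compute $\beta_1$. Because the minimal generators of $G$ involve only two variables, $G$ is extended from a monomial ideal of a two-dimensional polynomial ring, which has projective dimension at most $1$; hence $\mathrm{pd}_R G \le 1$ (equivalently, this is visible from the Hilbert--Burch or Eliahou--Kervaire resolution). From $\beta_i(J) \le \beta_i(G)$ we get $\mathrm{pd}_R J \le 1$ as well. Now both $J$ and $G$ are nonzero ideals of the domain $R$, hence torsion-free $R$-modules of rank $1$, and of projective dimension at most $1$; so each admits a minimal free resolution $0 \to R^{\beta_1}(\ast) \to R^{\beta_0}(\ast) \to M \to 0$, and comparing ranks gives $\beta_0(M) - \beta_1(M) = 1$. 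Therefore $\beta_1(J) = \beta_0(J) - 1 = \alpha = \beta_0(G) - 1 = \beta_1(G)$.

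Assembling the pieces, $\beta_0(J) = \beta_0(G)$, $\beta_1(J) = \beta_1(G)$, and $\beta_i(J) = \beta_i(G) = 0$ for $i \ge 2$, so the total Betti numbers agree; by the opening reduction the graded Betti numbers agree as well, and Theorem~\ref{thm:complingin} gives that $J$ is componentwise linear. The step I expect to demand the most care is the combinatorial analysis of $G$ in the second paragraph---showing that a strongly stable ideal generated in two variables and of initial degree $\alpha$ has at most $\alpha+1$ minimal generators---although for the ideals $J = I^{(m)}$ that actually occur in this paper that statement is exactly Proposition~\ref{prop:formofgin}. The remaining ingredients (monotonicity of Betti numbers under gin, the projective-dimension bound for two-variable monomial ideals, and the rank count) are routine.
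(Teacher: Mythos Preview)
Your proof is correct and follows essentially the same route as the paper's: reduce via Theorem~\ref{thm:complingin} to equality of Betti numbers, use that $\mathrm{gin}(J)$ has projective dimension at most $1$ and exactly $\alpha+1$ minimal generators, and conclude from the hypothesis $\beta_0(J)=\alpha+1$. The only cosmetic difference is that the paper packages the last step as ``no consecutive cancellation is possible'' (citing Peeva) rather than your rank identity $\beta_0-\beta_1=1$, and simply asserts the staircase shape of $\mathrm{gin}(J)$ from Borel-fixedness where you derive it in detail.
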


\begin{proof}
By Theorem \ref{thm:complingin}, $J$ is componentwise linear if and only if $J$ and $\text{gin}(J)$ have the same Betti numbers.  Since the Betti numbers of $J$ are obtained from those of $\text{gin}(J)$ by making a series of consecutive cancellations (see Section I.22 of \cite{Peeva11}),  $J$ is componentwise linear if and only if no consecutive cancellations occur.  However, since the minimal free resolution of $\text{gin}(J)$ is of the form
$$0 \rightarrow \bigoplus_j R(-j)^{\beta_{1,j}} \rightarrow  \bigoplus_j R(-j)^{\beta_{0,j}} \rightarrow \text{gin}(J) \rightarrow 0,$$
any consecutive cancellation must involve cancelling a $\beta_{0,j}$; these Betti numbers correspond to minimal generators of $\text{gin}(J)$.  Therefore, showing that $J$ is componentwise linear in this case is equivalent to showing that the minimal generators of $J$ and $\text{gin}(J)$ are of the same degrees or, equivalently by consecutive cancellation, that $J$ and $\text{gin}(J)$ have the same number of generators.

Since $\alpha$ is the degree of the least degree generator of $J$, it is also the degree of the least degree generator of  $\text{gin}(J)$.  By Borel-fixedness,
$$\text{gin}(J) = (x_1^{\alpha}, x_1^{\alpha-1}x_2^{\lambda_{\alpha-1}}, \dots, x_1x_2^{\lambda_1}, x_2^{\lambda_0})$$
for some invariants $\{ \lambda_i\}_i$ and $\text{gin}(J)$ has $\alpha+1$ generators.  Since $J$ also has $\alpha+1$ generators, it must be componentwise linear.
\end{proof}

To prove Theorem \ref{thm:componentwiselinearlpointson} it remains to show that the conditions of the Proposition \ref{prop:componentwiseequivalence} are satisfied when $J=I^{(m)}$ and $l(l-1)$ divides $m$.  That is, we need to show that  the degree of the smallest degree generator of $I^{(m)}$ is one less than the number of minimal generators of $I^{(m)}$.  To demonstrate this we will compute the number of generators of each degree using the procedure outlined in Section \ref{sec:surfacebackground}.

%Due to space considerations, we will sketch the details of this procedure only for the case where $I$ is the ideal of a configuration where $l$ points lie on a line and one point lies off of the line.  These computations are detailed in Sections \ref{sec:Zardecomp} through \ref{sec:gensofIm}.  The degrees of the generators of $I^{(m)}$ for the case where $l$ points lie on a line and two points lie off of the line are listed in Section \ref{sec:gens2off}.  Finally, Section \ref{sec:proofofcomplin} uses these results to prove Theorem \ref{thm:componentwiselinearlpointson}.

%%%
\subsection{Finding ${H}_d$}
\label{sec:Zardecomp}
Fix points $p_1, \dots, p_l$ be points of $\mathbb{P}^2$ lying on a line and let $p_{l+1}$ be a point off of that line.  Let $I$ be the ideal of $\{ p_1, \dots, p_{l+1} \} $ and $X$ be the blow-up of the points $p_1, \dots, p_{l+1}$.
Throughout this section we will assume that $m=\rho l (l-1)$ for some $\rho \in \mathbb{N}$ and write 
$$a_0E_0 + a_1E_1 +\cdots+ a_rE_{l+1} := (a_0; a_1, \dots, a_{l+1}).$$
For convenience we will write elements of $\text{NEG}(X)$ as
$$A:=E_0- E_1- \cdots - E_l, \qquad B_i:=E_0-E_i-E_{l+1}.$$

\subsubsection{$d \geq lm$}

In this case $[F_d] \cdot [C] \geq 0$ for all $[C] \in \text{NEG}(X)$ so
$$\mathcal{H}_d = \mathcal{F}_d.$$

%%%
\subsubsection{$ 2m \leq d < lm$}
\label{sec:misc2}

In this case we may subtract copies of $A$ in Procedure \ref{proc:findH}, but no copies of the $B_i$s are subtracted.
\begin{eqnarray*}
{H}_d &=&{F}_d - \Big \lceil  \frac{lm-d}{l-1} \Big \rceil A\\
&=& \Big ( d- \Big \lceil  \frac{lm-d}{l-1} \Big \rceil ; m - \Big \lceil  \frac{lm-d}{l-1} \Big \rceil , \dots, m- \Big \lceil  \frac{lm-d}{l-1} \Big \rceil  ,m    \Big) 
\end{eqnarray*}

%%%
\subsubsection{$2m - \frac{m}{l} \leq d < 2m$}
\label{sec:misc1}

Write $d = 2m-\gamma$.  Both $[F_d] \cdot [B_i] < 0$ and $[F_d] \cdot [A] < 0$, so we may subtract copies of $A$ and $B_i$.  Procedure \ref{proc:findH} yields
\begin{eqnarray*}
{H}_d &=&{F}_d -  \Big \lceil  \frac{lm-d}{l-1} \Big \rceil A - \gamma(B_1+\cdots + B_l) \\
&=& \Big( d-  \Big \lceil  \frac{lm-d}{l-1} \Big \rceil - \gamma l ; m -  \Big \lceil  \frac{lm-d}{l-1} \Big \rceil - \gamma, \dots, m-  \Big \lceil  \frac{lm-d}{l-1} \Big \rceil- \gamma, m-\gamma l \Big )
\end{eqnarray*}

%%%
\subsubsection{$d< 2m-\frac{m}{l}$}
In this case Procedure \ref{proc:findH} will eventually yield a divisor class $[G] = a_0e_0 - \cdots - a_re_r$ where $a_0 <0$ so $G$, and thus $F_d$, is not effective.  In this case
$h^0(X, \mathcal{F}_d) = 0.$

%%%%
\subsection{Determining $s(\mathcal{F}_d, e_0)$}
\label{sec:sFd}
Fix $I$, $X$, and $m = \rho l(l-1)$ as in the previous section.  
We may compute $s(\mathcal{F}_d, e_0)$ using expression (\ref{eqn:snoteff}) when $\mathcal{F}_d$ is not effective and expression (\ref{eqn:seff}) when $\mathcal{F}_d$ is effective.  We will use the following information to evaluate these expressions.
\begin{itemize}
\item The divisors ${H}_d$ and ${H}_{d+1}$ computed in the previous section.
\item When $[{F}_{d+1}]$ is the class of an effective divisor,
$$h^0(X, \mathcal{F}_{d+1}) = h^0(X, \mathcal{H}_{d+1}) = ([{H}_{d+1}]^2 - [K_X] \cdot [{H}_{d+1}]) /2 +1$$ 
by Lemma \ref{lem:h0ofNEFF} and Procedure \ref{proc:findH}.
\end{itemize}

\subsubsection{$d \geq lm$}

In this case, ${H}_d + E_0 ={F}+E_0 ={F}_{d+1}$ so 
$$s(\mathcal{F}_d, e_0) = h^0(X, \mathcal{F}_{d+1}) - h^0(X, \mathcal{F}_{d+1}) = 0$$

\subsubsection{$2m \leq d \leq lm-2$}

Write $d = jm+w(l-1)+p$ where $j = 2, \dots, l-1$, $w = 0,\dots, \rho l -1$, and $p = 0, \dots, l-2$.  Then 
$ \Big \lceil  \frac{lm-d}{l-1} \Big \rceil = (l-j) l \rho -w$ and, referring to the expression for ${H}_d$ from Section \ref{sec:misc2}, we see that ${H}_d+E_0$ and ${H}_{d+1}$ will be equal when $p \neq l-2$.  Thus, when $p \neq l-2$, 
$$s(\mathcal{F}_d, e_0) = h^0(X, \mathcal{H}_{d+1}) - h^0(X, \mathcal{H}_d + e_0)=0.$$
When $p = l-2$, we compute
$$s(\mathcal{F}_d, e_0) = h^0(X, \mathcal{F}_{d+1}) - h^0(X, \mathcal{H}_d+e_0) = 1.$$

\subsubsection{$2m-\frac{m}{l} \leq d < 2m$}

Write $d = 2m - (p+w(l-1))$ where and: $w=0, \dots, \rho$, $p= 1, \dots, l-2$ when $w = 0$; $p = 0, \dots, l-2$ when $w= 1, \dots, \rho-1$; and $p=0$ when $w=\rho$.  Then we use the expressions for ${H}_d$ from Section \ref{sec:misc1} to find 
\begin{eqnarray*}
s(\mathcal{F}_d, e_0) = \begin{cases} l &\mbox{if } p\neq 1 \\ 
l+1 & \mbox{if } p=1. \end{cases} 
\end{eqnarray*}

\subsubsection{$d = 2m - \frac{m}{l} - 1$}
In this case $[{F}_d]$ is not in the class of an effective divisor so
$$s(\mathcal{F}_d, e_0) = h^0(X, \mathcal{F}_{d+1}) = h^0(X, \mathcal{H}_{d+1}) = 1.$$

\subsubsection{$d< 2m-\frac{m}{l}-1$}

In this case neither $[{F}_d]$ nor $[{F}_{d+1}]$ is in the class of an effective divisor so
$$s(\mathcal{F}_d, e_0) = h^0(X, \mathcal{F}_{d+1}) = 0.$$

%%%%
\subsection{Generators of $I^{(m)}$}
\label{sec:gensofIm}
Let $I$, $X$, and $m$ be as in Sections \ref{sec:Zardecomp} and \ref{sec:sFd}.
We summarize the number of generators of each degree, using the results of the previous section and the fact that $v_{d+1}(I^{(m)}) = s(\mathcal{F}_d, e_0)$.

\subsubsection{$2m < d \leq lm$}
There is one generator of $I^{(m)}$ of degree $jm+w(l-1)+(l-2)+1 = jm+w(l-1) + l-1$ for each $j=2, \dots, l-1$, $w = 0, \dots \rho l -1$.  

\subsubsection{$2m-\frac{m}{l} +1 \leq d \leq 2m$}
\begin{itemize}
\item There are $l$ generators of degrees
$$2m-(p+w(l-1))+1$$
when $w=0$ and $p=2, \dots, l-2$ or $w = 1, \dots, \rho-1$ and $p = 0, 2, 3, \dots, l-2$.
\item There are $l$ generators of degree
$$2m-(\rho(l-1))+1 = 2m - \frac{m}{l}+1.$$
\item There are $l+1$ generators of degrees 
$$2m - (1+w(l-1))+1 = 2m-w(l-1)$$
where $w=0, 1, \dots, \rho-1$.
\end{itemize}

\subsubsection{$d = 2m-\frac{m}{l}$}
There is exactly one generator of degree $2m-\frac{m}{l}$.

%%%
\subsection{Componentwise Linearity}
\label{sec:proofofcomplin}

\begin{proof}[Proof of Theorem \ref{thm:componentwiselinearlpointson}]

Let $I$ be the ideal corresponding to a point configuration where $l$ points lie on a line and one point lies off of the line. Fix $m = \rho(l)(l-1)$ for $\rho \in \mathbb{N}$.   By Proposition \ref{prop:componentwiseequivalence} it is sufficient to show that the degree of the smallest degree generator of $I^{(m)}$ is one less than the number of elements in a minimal generating set of $I^{(m)}$.  
By our work in Section \ref{sec:gensofIm} the smallest degree generator of $I^{(m)}$ is of degree $2m-\frac{m}{l}$.  The number of minimal generators is equal to 

\begin{eqnarray*}
&& [\text{no. gens. of degree } > 2m ] + [\text{no. gens. of degree }  d, \\
& & \quad 2m-\frac{m}{l}+1 \leq d \leq 2m] +  [\text{no. gens. of degree } \leq 2m - \frac{m}{l}]  \\
&=& [(l-2)(\rho l)] + [ l(l-2)(\rho-1)+l(l-3)+l+(l+1)\rho]+1\\
&=& [ \rho l^2 - 2 \rho l] + [ \rho l^2- \rho l + \rho ] +1\\
&=& 2m - \rho l + \rho +1 = 2m - \frac{m}{l} +1.
\end{eqnarray*}

 \end{proof}
\section{Computation of the Limiting Shape}
\label{sec:asympproof}

In this section we will prove Theorem \ref{thm:limitingshapelpointson} using the fact from Theorem \ref{thm:componentwiselinearlpointson} that infinitely many of the $I^{(m)}$ are componentwise linear.

\begin{proof}[Proof of Theorem \ref{thm:limitingshapelpointson}] 
Let $m$ is divisible by $l$ and $l-1$ so that $m = \rho l (l-1)$ for some $\rho \in \mathbb{N}$.  Then $I^{(m)}$ is componentwise linear by Theorem \ref{thm:componentwiselinearlpointson}.
 Proposition \ref{prop:formofgin} then implies that 
$$\text{gin}(I^{(m)}) = (\{ x^{\alpha(m)}, x^{\alpha(m) -1}y^{\lambda_{\alpha(m)-1}(m)}, \dots, xy^{\lambda_1(m)}, y^{\lambda_0(m)}\})$$
where $\alpha(m) = 2m-\frac{m}{l}$ and $\lambda_0(m) = lm$ by our work in Section \ref{sec:gensofIm}.

We may think of the sequence of invariants $\{ \lambda_i(m)\}_i$ of $\text{gin}(I^{(m)})$ as having two phases: the first phase corresponds to $\lambda_i(m)$ where $\lambda_i(m)+i > 2m$ and the second corresponds to $\lambda_i(m)$ where $\lambda_i(m) +i \leq 2m$.  The $\lambda_i(m)$ within each of these two phases are regularly spaced; that is, there are patterns in their gaps.  Taking this into consideration, it is not difficult to see that the Newton polytope of $\text{gin}(I^{(m)})$ is defined by the points $(\alpha(m),0) = (2m-\frac{m}{l},0)$, $(J, \lambda_J(m))$ where $\lambda_J+J= 2m$, and $(0, \lambda_0(m)) = (0,lm)$ (see Figure \ref{fig:newtonpolytope}).  Also note that when $\lambda_i+i > 2m$
$$(l-1) = (\lambda_{i-1}+i-1) - (\lambda_i+i) = \lambda_{i-1}- \lambda_i - 1$$
so $\lambda_{i-1} - \lambda_i = l$ and the slope of the line $L_1$ in Figure \ref{fig:newtonpolytope}  is equal to $-l$.
There are a total of $\rho l^2 - 2 \rho l = m- \frac{m}{l-1}$ generators corresponding to the first subset of invariants so $J = m - \frac{m}{l-1}$ and 
$$\lambda_J(m) = lm - l \Big(m- \frac{m}{l-1} \Big ) = l( \frac{m}{l-1}).$$
Note that 
$$J+\lambda_J(m) = m - \frac{m}{l-1} + l \Big( \frac{m}{l-1} \Big) = m - \frac{m}{l-1} (l-1) = 2m$$
as required.

\begin{figure}
\begin{center}
\includegraphics[width=5cm]{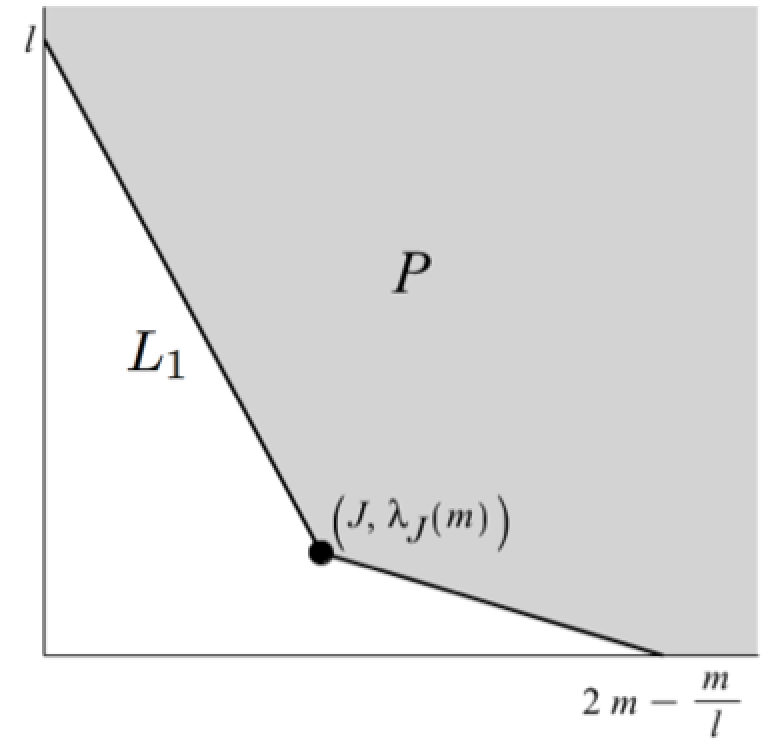}
\end{center}
\caption{The Newton polytope of $\textnormal{gin}(I^{(m)} )$ where $I$ is an ideal corresponding to a point configuration with $l$ points on a line and one point off of that line.}
\label{fig:newtonpolytope}
\end{figure}

Thus, the limiting shape of $\{ \text{gin}(I^{(m)}) \}_m$ is defined by the points:
$$\Big( \lim_{m \rightarrow \infty} \frac{2m-m/l}{m}, 0 \Big ) = \Big ( 2- \frac{1}{l}, 0\Big)$$
$$\Big( 0, \lim_{m\rightarrow \infty} \frac{lm}{m} \Big) = (0, l)$$
$$\Big (  \lim_{m\rightarrow \infty} \frac{m- \frac{m}{l-1}}{m}, \lim_{m \rightarrow \infty} \frac{l\frac{m}{l-1}}{m}\Big) = \Big ( 1- \frac{1}{l-1}, \frac{l}{l-1} \Big )$$
and claimed.

\end{proof}

One can easily check that the area under the limiting shape is equal to $\frac{l+1}{2}$.  This is consistent with the general fact that the area under the limiting shape of $\{ \text{gin}(I^{(m)}) \}_m$ when $I$ is the ideal of $r$ points is equal to $\frac{r}{2}$ (see Proposition 2.14 of \cite{Mayesgenericpoints}).

\bibliography{lpointsonbib}

\providecommand{\bysame}{\leavevmode\hbox to3em{\hrulefill}\thinspace}
\providecommand{\MR}{\relax\ifhmode\unskip\space\fi MR }
% \MRhref is called by the amsart/book/proc definition of \MR.
\providecommand{\MRhref}[2]{%
  \href{http://www.ams.org/mathscinet-getitem?mr=#1}{#2}
}
\providecommand{\href}[2]{#2}
\begin{thebibliography}{May13b}

\bibitem[AHH00]{AHH00}
A.~Aramova, J.~Herzog, and T.~Hibi, \emph{Ideals with stable betti numbers},
  Adv Math \textbf{152} (2000), no.~1, 72--77.

\bibitem[CH12]{CH12}
S.~Cooper and S.~Hartke, \emph{The alpha problem and line count
  configurations}, {\tt
  http://www.math.unl.edu/{~}shartke2/math/papers/geom-alpha-conj.pdf}.

\bibitem[CHT11]{CHT11}
S.~Cooper, B.~Harbourne, and Z.~Teitler, \emph{Combinatorial bounds on
  {H}ilbert functions of fat points in projective space}, J Pure Appl Algebra
  \textbf{215} (2011), no.~9, 2165--2179.

\bibitem[Fra05]{Francisco05}
C.A. Francisco, \emph{Resolutions of small sets of fat points}, J Pure Appl
  Algebra \textbf{203} (2005), 220--236.

\bibitem[GH07]{GH07}
E.~Guardo and B.~Harbourne, \emph{Resolutions of ideals of any six fat points
  in $\mathbb{P}^2$}, J Algebra \textbf{318} (2007), no.~2, 619--640.

\bibitem[Har98]{Harbourne98}
B.~Harbourne, \emph{Free resolutions of fat point ideals on $\mathbb{P}^2$}, J
  Pure Appl Algebra \textbf{125} (1998), no.~1, 213--234.

\bibitem[Har02]{Harbourne02}
B.~Harbourne, \emph{Problems and progress: A survey on fat points in
  $\mathbb{P}^2$}, Queen's Papers in Pure and Appl. Math., vol. 123, Queen's
  University, Kingston, 2002, pp.~85--132.

\bibitem[HH99]{HH99}
J.~Herzog and T.~Hibi, \emph{Componentwise linear ideals}, Nagoya Math J
  \textbf{153} (1999), 141--153.

\bibitem[Mar03]{Markwig03}
H.~Markwig, \emph{Free resolutions of fat points on a conic in $\mathbb{P}^2$},
  {\tt http://xcrcg.uni-math.gwdg.de/wiki/images/a/ab/Fatpoints.pdf}.

\bibitem[May13a]{Mayesgenericpoints}
S.~Mayes, \emph{The asymptotic behaviour of symbolic generic initial systems of
  generic points}, {\tt arXiv:1210.1622}.

\bibitem[May13b]{Mayes6points}
\bysame, \emph{The asymptotics of symbolic generic initial systems of six
  points in $\mathbb{P}^2$}, {\tt arXiv:1202:1317 [math.AC]}, to appear in
  {Comm. Alg}.

\bibitem[Pee11]{Peeva11}
I.~Peeva, \emph{Graded syzygies}, Algebra and Applications, vol.~14, Springer,
  London, 2011.

\end{thebibliography}
\bibliographystyle{amsalpha}
\nocite{*}

%\bibliography{lonLineOneOff}
%\bibliographystyle{amsalpha}
%\nocite{*}

\end{document}